\newtheorem{theorem}{Theorem}
\newtheorem{proposition}{Proposition}
\newtheorem{corollary}{Corollary}
\newtheorem{lemma}{Lemma}
\newtheorem{claim}{Claim}
\numberwithin{theorem}{section}
\numberwithin{lemma}{section}
\numberwithin{proposition}{section}
\numberwithin{corollary}{section}
\numberwithin{claim}{section}
\newcommand{\thmref}[1]{Theorem~\ref{thm:#1}} 
\newcommand{\eqnref}[1]{(\ref{eq:#1})} 
\def\be{\begin{equation} }
\def\ee{ \end{equation}}
\def\ben{\begin{equation*}}
\def\een{\end{equation*}}
\def\bea{\begin{eqnarray}}
\def\eea{\end{eqnarray}}
\def\ee{\end{eqnarray}}
\def\bean{\begin{eqnarray*}}
\def\eean{\end{eqnarray*}}
\newcommand\ignore[1]{}
\def\R{\mathbb{R}} 
\def\N{\mathbb{N}} 
\newcommand{\geo}{\text{Geo}}
\newcommand{\nh}{\text{NH}}
\newcommand{\Ex}[1]{\mathbb{E}\left[#1\right]} 
\newcommand{\Prwo}{\mathbb{P}} 
\renewcommand{\Pr}[1]{\mathbb{P}\left(#1\right)} 
\newcommand{\liloh}[1]{o\left(#1\right)}
\def\sF{\mathcal{F}}
\def\sJ{\mathcal{J}}
\def\sN{\mathcal{N}}
\def\sT{\mathcal{T}}
\newcommand\QED{\ifhmode\allowbreak\else\nobreak\fi
\quad\nobreak$\Box$\medbreak}
\newcommand{\proofstart}{\par\noindent\sl Proof:\rm\enspace}
\newcommand{\proofend}{\QED\par}
\newenvironment{proof}{\proofstart}{\proofend}
\def\eps{\epsilon}
\begin{document}

\title{Rumor Spreading on Percolation Graphs}
\author{Roberto I. Oliveira\footnote{Supported by a {\em Bolsa de Produtividade em Pesquisa} and a {\em Pronex} grant from CNPq, Brazil.} \and Alan Prata\footnote{Supported by a doctoral scholarship from CNPq, Brazil.}} \maketitle
\begin{abstract} We study the relation between the performance of the randomized rumor spreading (push model) in a $d$-regular graph $G$ and the performance of the same algorithm in the percolated graph $G_p$. We show that if the push model successfully broadcast the rumor within $T$ rounds in the graph $G$ then only $(1 + \epsilon)T$ rounds are needed to spread the rumor in the graph $G_p$ when $T = \liloh{pd}$.
\end{abstract}

\section{Introduction} {\it Randomized rumor spreading} or {\it randomized broadcasting} are simple randomized algorithms to spread information in networks.
In this work we consider the classical {\it push model} for rumor spreading, which is described as follows. Initially, one arbitrary vertex knows an information. In the succeding time steps each informed vertex chooses a neighbor independently and uniformly at random and forward the information to it. The fundamental question is: how many time steps are needed until every vertex of the network has been informed?

The push model has been extensively studied. Most of the papers analyze the runtime of this algorithm on different graph classes. On the complete graph, Frieze and Grimmett \cite{Fri} proved that {\it with high probability\footnote{With high probability, also denoted w.h.p., refers to an event that holds with probability $1 - \liloh{1}$ as the size of the graph tends to infinity.}} $(1 + \liloh{1})(\log_2n + \log{n})$ rounds are necessary and sufficient to inform all vertices. In \cite{Feige} Feige {\it et al.} gave general upper bounds holding for any graph and determine the runtime on random graphs. Also, Chierichetti, Lattanzi and Panconesi \cite{Chi} proved runtime bounds in terms of the conductance and Sauerwald and Stauffer \cite{Sau} obtain bounds in terms of the vertex expasion for regular graphs.

The starting point of this paper is a recent article by Fountoulakis, Huber and Panagiotou \cite{Foun} which analyze the push protocol on the Erd\"{o}s-R\'{e}nyi random graph $G_{n, p}$ where $p\gg\frac{\ln n}{n}$. Among other things, they show that the protocol will inform every vertex in $(1+\liloh{1})(\log_2n +\ln n)$ steps w.h.p.. One may restate this result by  the runtime of randomized broadcasting on a complete graph is essentially not affected by random edge deletions, at least up to the connectivity threshold $p=\ln n/n$. 

In this paper we prove a partial extension of this result to the case of arbitrary percolation graphs. Here one starts with some arbitrary graph $G$ and performs edge percolation on it. Under certain conditions, we show that this will not increase the runtime of the protocol by more than a $1+\liloh{1}$ factor. This suggests that the push protocol is robust against random edge failures, which is a desirable quality for applications.

We need some definitions in order to state our main Theorem. Given a graph $G=(V,E)$, we let $G_p$ denote the random subgraph of $G$ where each edge is removed independently with probability $1-p$ (the vertex set stays the same). We let $\sT_v(G),\sT_v(G_p)$ denote the runtimes of the push protocol starting at $v$ over $G$ and $G_p$ (respectively). 

\begin{theorem}\label{thm:push} Let $G_n = (V_n, E_n)$ be a sequence of $d_n$-regular graph where $|V_n| = n \rightarrow \infty$ and $v_n\in V_n$. Suppose that exists $T_n$ such that $\sT_{v_n}(G_n) \leq T_n$ with high probability. Then, given any $\epsilon > 0$, if $0<p_n<1$ is such that $T_n = \liloh{p_n.d_n}$ we have that $\sT_{v_n}(G_{n, p_n}) \leq (1 + \epsilon)T_n$ with high probability (here the probability is over the choice of $G_{n,p_n}$ and over the additional randomness of the push protocol).\end{theorem}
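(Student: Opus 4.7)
The plan is to couple the $G_n$ and $G_{n,p_n}$ push protocols on a single probability space and then compare informing times through a Chernoff-based timing estimate. For every $v \in V_n$ and every $t \geq 1$ sample an i.i.d.\ uniform pick $X_v^t \in N_{G_n}(v)$, and for every edge $e \in E_n$ an independent Bernoulli$(p_n)$ variable $\omega_e$, setting $E(G_{n,p_n}) = \{e : \omega_e = 1\}$. The $G_n$ protocol is driven by the $X_v^t$'s in the usual way. For the $G_{n,p_n}$ protocol, let $\tau_v(s)$ denote the $s$-th index $i$ with $(v, X_v^i) \in E(G_{n,p_n})$, and at $G_{n,p_n}$-round $s$ have each informed $v$ push to $X_v^{\tau_v(s)}$. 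Conditionally on $G_{n,p_n}$, this is a bona fide $G_{n,p_n}$ run, since $X_v^{\tau_v(s)}$ is then i.i.d.\ uniform on $N_{G_{n,p_n}}(v)$.

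The first technical step is to control the sub-sampling times. Conditional on the surviving degree $d_v^* := |N_{G_{n,p_n}}(v)|$, the indicators $\mathbf{1}[(v, X_v^i) \in E(G_{n,p_n})]$ are i.i.d.\ Bernoulli$(d_v^*/d_n)$. Because $p_n d_n \gg T_n$ and $T_n \geq \log_2 n$ (the trivial lower bound for the push model), a Chernoff bound combined with a union bound over $v$ yields, with probability $1 - \liloh{1}$, that $d_v^* = (1 \pm \liloh{1}) p_n d_n$ and $\tau_v(s) = (1 \pm \liloh{1}) s / p_n$ for all $v \in V_n$ and all $s \leq (1+\epsilon)T_n$. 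Setting $T^* := (1+\epsilon) T_n / p_n$, at $G_{n,p_n}$-round $(1+\epsilon)T_n$ each vertex has used only underlying indices $\tau_v(s) \leq T^*$, and $T^* = \liloh{d_n}$, so the realized picks are essentially without replacement.

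The heart of the argument is a deterministic path-lifting statement implied by the coupling: if there is a sequence $v_n = u_0 \to u_1 \to \cdots \to u_k = u$ with $X_{u_{i-1}}^{t_i} = u_i$, strictly increasing times $t_1 < \cdots < t_k \leq T^*$, every edge $(u_{i-1}, u_i) \in E(G_{n,p_n})$, and the intermediate values $\Psi_{u_{i-1}}(t_i)$ (where $\Psi_v(t) := |\{i \leq t : (v, X_v^i) \in E(G_{n,p_n})\}|$) also form a strictly increasing sequence, then under the coupling $u$ is informed in the $G_{n,p_n}$ process by round $\Psi_{u_{k-1}}(t_k) \leq (1 + \liloh{1}) p_n T^* = (1+\epsilon)(1+\liloh{1}) T_n$. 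So the theorem reduces to producing such a surviving chain for every vertex, using the fact that the $G_n$ protocol at time $T^* \gg T_n$ informs every vertex many times over.

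The main obstacle, and the only place where the assumption $T_n = \liloh{p_n d_n}$ is really used, is to produce these surviving chains. A fixed $G_n$-informing chain of length $\sim T_n$ survives percolation only with the negligible probability $p_n^{T_n}$, so one cannot just lift the $G_n$ broadcast tree; moreover the Chernoff gap estimates are only sharp enough to guarantee strictly increasing $\Psi$-values when the time-stamps on the chain are reasonably well separated. My plan is to exploit the large time-slack $T^* - T_n \sim T_n / p_n$ granted by the previous step: during these extra rounds each vertex makes many further uniform random picks, and because each vertex ever uses only $\liloh{d_n}$ of its $d_n$ neighbors in total, the candidate picks involve essentially fresh neighbors and behave, to leading order, as independent Bernoulli$(p_n)$ thinnings of many distinct predecessors. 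One would then try to build the surviving chains by a BFS-style induction that maintains at each level a surviving frontier of size $\ohmega{\log n}$ (possible precisely because $p_n d_n \gg T_n$), concluding by a union bound over $V_n$. Carefully controlling the dependencies between the candidate chains for distinct vertices, while combining them with the Chernoff timing estimates, is where I expect the bulk of the technical work to lie.
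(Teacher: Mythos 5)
Your coupling makes the percolation variables $\omega_e$ independent of the protocol picks $X_v^t$, and this is exactly where the argument breaks: as you yourself note, under such a coupling the informing chains of the $G_n$-protocol survive in $G_{n,p_n}$ only with probability about $p_n^{T_n}$, so the hypothesis $\sT_{v_n}(G_n)\leq T_n$ w.h.p.\ cannot be transferred along the coupling. The repair you propose (a BFS-style induction maintaining a surviving frontier of size $\ohmega{\log n}$, built from the extra picks made during the slack $T^*-T_n$) is not carried out, and more importantly it cannot be grounded in the stated hypotheses: the only information available about $G_n$ is that it is $d_n$-regular and that push started at $v_n$ finishes within $T_n$ rounds w.h.p. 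To reach a \emph{fixed} target vertex $u$ by a chain whose edges survive independent percolation, a frontier-growth argument would need something like many (nearly) edge-disjoint time-respecting paths into $u$, i.e.\ an expansion-type property of $G_n$; a runtime bound for a single run of the protocol gives no such structure, and "fresh neighbors of already informed vertices" does not help create new routes into a vertex that the surviving chains have failed to reach. So the heart of the theorem is precisely the step you leave open.

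The paper resolves this by building the percolation \emph{into} the protocol's choices rather than keeping it independent: for each vertex $u$ it constructs (via Bernoulli indicators $A_e I'_{u\to v}$ and Strassen's lemma, a sprinkling step needed because the two orientations of an edge share $A_e$) nested random sets $\sN_u\subset\sN_u^p\subset\Gamma_p(u)$ with $|\sN_u|\gg T_n$ w.h.p., such that $\sN_u$ is a uniform subset of $\Gamma(u)$ of its size and $\sN_u^p$ a uniform subset of $\Gamma_p(u)$ of its size. Running push \emph{without replacement} on $G$ so that each vertex uses the elements of $\sN_u$ first, every edge used up to time $T_n$ automatically lies in $G_p$, so the informing paths survive deterministically; the time dilation on $G_p$ is then the waiting time in sampling without replacement from $\sN_u^p$ until hitting $\sN_u$, a negative hypergeometric variable dominated by geometric sums with parameter $1-\liloh{1}$, giving the $(1+\epsilon)$ loss. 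The with/without-replacement issue, which you dismiss as "essentially without replacement," also requires a separate argument (Proposition 3.1 of the paper) comparing $\sT$ and $\sJ$ under $T_n=\liloh{d_n}$. Your timing estimates ($\tau_v(s)\approx s/p_n$, $d_v^*\approx p_n d_n$) are fine but peripheral; without a mechanism forcing the $G_n$-informing structure to survive percolation, the proof does not go through.
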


One can check in our proofs that the same result holds if $G_n$ has minimum degree $d_n$. The case where $G_n=K_n$ is complete shows that the condition $T_n=\liloh{p_nd_n}$ cannot be removed in general. We also remark that proving lower bounds for $\sT_{v_n}(G_{n,p_n})$ in terms of $\sT_{v_n}(G_n)$ is an interesting open problem, but the upper bound we give seems more interesting for applications

\subsection{Proof ideas}

The proof strategy of \cite{Foun} relies on the geometry of the Erd\"{o}s-R\'{e}nyi graph above the connectivity threshold. We have no such information available in our general setting, and instead rely on a very different proof strategy:

\begin{quotation}\noindent{\bf Proof strategy:} Construct a coupling of:
$$\mbox{[push over $G_n$]} \leftrightarrow \mbox{[random choice of $G_{n,p}$ $+$ push over $G_{n,p_n}$.]}$$ 
Then show that the runtimes of push over the two graphs are close under the coupling.\end{quotation}

It is not hard to sketch a coupling that solves the analogous problem over oriented graphs. Assume $D=(V,F)$ is a $n$-vertex {\em digraph}. Define $D_p$ as the random digraph obtained from $D$ by deleting each oriented edge with probability $1-p$. We consider a variant of push over $D$ and $D_p$, where each informed vertex $v$ pushes the rumour along outgoing edges chosen uniformly but {\em without replacement}. We will assume that all vertices have the same out-degree $d$ and that this modified push protocol over $D$ typically takes $T\ll pd$ steps.

We now couple 
$$\mbox{[push over $D$ up to time $T$]} \leftrightarrow \mbox{[random choice of $D_{p}$ $+$ push over $D_p$ up to time $T$.]}$$ 
For this we need two Bernoulli (indicator) random variables $A_{v\to w}$ and $I_{v\to w}$ for each oriented edge $v\to w$. All of those variables will be assumed independent, and we take:
$$\Pr{A_{v\to w}=1}=p\mbox{ and }\Pr{I_{v\to w}=1} = \frac{CT}{pd}\mbox{ for some $C>0$.}$$
Notice that $T\geq \log_2 n$ as the number of informed vertices can only double at each time step. Chernoff bounds (see \cite{AS}) imply that, if $C$ is sufficiently large, then w.h.p., for all $v\in V$, the set 
$$\sN(v)\equiv\{w\in V\,:\,v\to w\in F,\, A_{v\to w} I_{v\to w} = 1\}$$
will have at least $T$ elements, as each of the $d$ out-neighbors of $v$ belongs to this set with probability $CT/d$. This means we can run modified push over $D$ by having each $v$ select the edges $v\to w$ with $w\in \sN(v)$ in a random order, up to time $T$. Notice that this gives the right distribution because, conditionally on $|\sN(v)|=k\geq T
$, $\sN(v)$ is uniform over all $k$-subsets of out-neighbors of $v$ in $D$.

We now let $D_p$ be the digraph whose oriented edges are the pairs $v\to w$ with $A_{v\to w}=1$. To run modified push on $D_p$, have each $v$ select the edges $v\to w$, $w\in\sN(v)$, in the same order as in the protocol over $D$. The key points are that:
\begin{itemize}
\item This gives the right distribution because conditionally on $D_p$ and on $|\sN(v)|=k$, $\sN(v)$ is uniform over $k$-subsets of the out-neighbors of $v$ in $D_p$.
\item The set of informed vertices in $D$ and $D_p$ coincide up to time $T$. In particular, if all vertices are informed in $D$ up to time $T$ w.h.p., the same will hold in $D_p$.
\end{itemize}
This shows that the modified push protocol cannot take longer in $D_p$ than its typical runtime in $D$. 

As presented, this proof strategy cannot work for non-oriented graphs. The first problem is that the neighbors to be informed in the push protocol are chosen with replacement. This, however, is not hard to deal with; see Proposition \ref{L1} below. 

A second and more serious problem is that, if one tries to copy the above coupling, the events $w\in \sN(v)$ and $v\in \sN(v)$ will be positively correlated given $\{v,w\}\in G_{n,p_n}$. The solution to this will be to introduce a few {\em extra steps} in the push protocol over $G_{n,p_n}$. This is a kind of sprinkling idea. The upshot will be that the set of neighbors of $v$ chosen in push-over-$G_{n,p_n}$ will dominate the set chosen in push-over-$G$, but the diffeence between the two sets will be so small that this will not matter much. (Incidentally, this is where the $\eps$ in the Theorem comes from.)

\subsection{Organization}

This paper is organized as follows. In Section 2 we describe our model more formally,  introduce some basic notation and prove preliminary results that will be used in the proof of the main result. In Section 3 we presente the proof of the \thmref{push}. Some of the results stated in Section 2 are proved in the Appendix.

\section{Preliminaries}

Let $G = (V, E)$ be an unweighted, undirect, simple and connected graph, where $V$ is the set of vertices, $E$ the set of edges and $|V|$ denotes the size of the graph. We consider families of graphs $G_n = (V_n, E_n)$ where $|V_n| = n$. For a vertex $v \in V$, ${\rm deg} (v)$ denotes the degree of $v$ and $\Gamma(v)$ denotes the set of neighbors of $v$.

Given some parameter $p \in [0, 1]$, we consider bond percolation in $G$ by removing each edge of G, independently and with probability $1 - p$. The graph obtained from this process is denoted by $G_p$. Also, ${\rm deg}_p(v)$ denotes the degree of $v$ in $G_p$ and $\Gamma_p(v)$ denotes the set of neighbors of $v$ in $G_p$.

As mentioned before, we consider the randomized broadcasting algorithm called push algorithm. Formally, the process just explained can be described as follows. Let $I(t)$ be the set of vertices informed at time t. Initially $t = 0$ and $I(0) = \{v\}$, for some choice of $v \in V$. While $I(t) \neq V$, each vertex $u \in I(t)$ chooses a neighbor $v_u^t$ independently and uniformly at random. The new informed set is
$$I(t + 1) = I(t) \cup \{v_u^t;u \in I(t)\}.$$ The process stops when $I(t) = V$.

We are interested in how many time steps are needed for the process to stop. For this, define $\sT_v (G) = \min \{t \in \mathbb{N} | I(t) = V\}$ as the first time step in which every vertex of $G$ has been informed.

A concept used throughout this work is stochastic domination. If $X$ and $Y$ are random variables taking values in $\R$, we say that $X$ is {\it stochastically smaller} than $Y$ and denote this by $X \preceq Y$ if $\Pr{X\geq t} \leq \Pr{Y\geq t}$ for any $t \in \R$.

If $X$ has distribution $\mu$ and $Y$ has distribution $\nu$, then $X \preceq Y$ if and only if
$$\int f \,d\mu \leq \int f \,d\nu$$
for any continuous and increasing function $f$. By approximation, $X \preceq Y$ implies the relation above for all increasing upper semicontinuous function $f$.

In this paper we write $X \sim Y$ when the random variables $X$ and $Y$ have the same distribution. We let $Geo(p)$, $Be(p)$, $Bin(n, p)$ denote, respectively, the {\it geometric distribution} with parameter $p$, the {\it Bernoulli distribution} with parameter $p$ and the {\it binomial distribution} with parameters $n$ and $p$.

The rest of this section is devoted to the study of the negative hypergeometric distribution. A negative hypergeometric random variable $X$ records the waiting time in trials until the $r$-th sucess is obtained in repetead random sampling without replacement from a dichotomous population of size $N$ with $d$ successes. In the following we show that a negative hypergeometric random variable is stochastically smaller than a sum of geometric random variables. After that we can bound the probability that the sum of negative hypergeometric distribution deviates above the mean using concentration inequalities for the sum of geometric random variables.

Formally, $X$ has {\it negative hypergeometric distribution}, denoted by $X\sim \nh(N, d, r)$, if
$$\Pr{X=k} = \frac{\dbinom{k - 1}{r - 1}\dbinom{N - k}{d - r}}{\dbinom{N}{d}}\cdot$$
The expected value of $X$ is $r\frac{N + 1}{d + 1}\cdot$

We start studying the relation between geometric and negative hypergeometric random variables. 

\begin{lemma}\label{neghyper} For each $j = 1, \dots, k_1$, let $X_j$ be the waiting time until the $jth$ sucess in trials without replacement from a population of size $k_1 + k_2$ with $k_1$ possibilities of sucess (each $X_j$ has distribution $NH(k_1 + k_2, k_1, j))$. Then:
\begin{equation}\label{eq:neghyper1}
 X_1  \preceq  \geo\left( \frac{k_1}{k_1 + k_2}\right)  \preceq  \geo\left( \frac{k_1}{k_1 + k_2} - \frac{j}{k_1 + k_2}\right);\end{equation}
\begin{equation}\label{eq:neghyper2}
X_{j + 1} - X_j| X_j  \preceq  \geo\left( \frac{k_1}{k_1 + k_2} - \frac{j}{k_1 + k_2}\right).\end{equation}
\end{lemma}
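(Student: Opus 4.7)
The plan is to prove both displayed statements by direct manipulation of the success probability along the sampling process, using the fact that removing failures can only increase the conditional chance of a success on the next draw.

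For the first bound in \eqnref{neghyper1}, I would compute the tail of $X_1$ explicitly. Since $X_1$ is the draw number of the first success in sampling without replacement,
\[
\Pr{X_1 > t} = \prod_{i=0}^{t-1} \frac{k_2 - i}{k_1 + k_2 - i},
\]
with the convention that the product is $0$ if $t > k_2$. For each $i \geq 0$ with $k_2 - i \geq 0$, the factor $(k_2-i)/(k_1+k_2-i)$ is at most $k_2/(k_1+k_2)$, because subtracting the same nonnegative integer from the numerator and denominator of a ratio less than one only makes the ratio smaller. Therefore $\Pr{X_1 > t} \leq (k_2/(k_1+k_2))^t = \Pr{\geo(k_1/(k_1+k_2)) > t}$, which is exactly $X_1 \preceq \geo(k_1/(k_1+k_2))$. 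The second inequality in \eqnref{neghyper1} is immediate: for $q \leq p$ in $[0,1]$, $(1-q)^t \geq (1-p)^t$, so $\geo(p) \preceq \geo(q)$; here we apply this with $p = k_1/(k_1+k_2)$ and $q = (k_1-j)/(k_1+k_2)$.

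For \eqnref{neghyper2}, I would condition on the event $\{X_j = x\}$. On this event, $j$ successes and $x - j$ failures have been drawn, so the remaining population has $k_1 + k_2 - x$ items, of which $k_1 - j$ are successes. Consequently, the conditional law of $X_{j+1} - X_j$ given $X_j = x$ is the waiting time for the first success in a new sampling-without-replacement problem, namely $\nh(k_1 + k_2 - x, k_1 - j, 1)$. Applying the already-proved bound in \eqnref{neghyper1} to these new parameters gives
\[
(X_{j+1} - X_j) \mid \{X_j = x\} \;\preceq\; \geo\!\left(\frac{k_1 - j}{k_1 + k_2 - x}\right).
\]
Since the support of $X_j$ forces $j \leq x \leq k_2 + j$, we have $k_1 + k_2 - x \leq k_1 + k_2$, hence $(k_1 - j)/(k_1 + k_2 - x) \geq (k_1 - j)/(k_1 + k_2)$, and then the same monotonicity of $\geo(\cdot)$ used above yields the bound by $\geo((k_1-j)/(k_1+k_2))$ uniformly in $x$. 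Taking the mixture over $x$ preserves the stochastic bound and gives the unconditional statement.

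The only real obstacle is keeping the direction of domination straight: $\geo$ is stochastically monotone decreasing in its parameter, while the conditional success probability increases as we remove failures, so the two effects line up in the claimed direction. Everything else is a short algebraic manipulation, and no separate argument for the second inequality in \eqnref{neghyper1} beyond the geometric monotonicity is needed.
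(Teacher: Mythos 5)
Your proposal is correct and follows essentially the same route as the paper: both arguments bound the tail of the waiting time by a product of per-draw failure probabilities, each dominated by a fixed constant, yielding the geometric bound, and both get \eqnref{neghyper2} by observing that after $X_j=x$ the remaining population has $k_1+k_2-x$ items with $k_1-j$ successes. Your only (harmless) streamlining is to recognize the conditional law as $\nh(k_1+k_2-x,\,k_1-j,\,1)$ and reuse \eqnref{neghyper1} together with monotonicity of $\geo(\cdot)$ in its parameter, which replaces the paper's explicit algebraic bound $\frac{k_2-(x-j)}{k_1+k_2-x}\le \frac{k_2+j}{k_1+k_2}$ by the equivalent, more transparent comparison of denominators.
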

\begin{proof} Since the number of failures in the population of size $k_1 +k_2$ is $k_2$, we have that
\bean \Pr{X_1 \geq m} &=& \frac{k_2}{k_1 + k_2}\,\frac{k_2 - 1}{k_1 + k_2 - 1} \cdots\frac{k_2 - (m - 2)}{k_1 + k_2 - (m - 2)}\\
 &\leq& \left( \frac{k_2}{k_1 + k_2} \right)^{m - 1} \\&=& \Pr{\geo\left(\frac{k_1}{k_1 + k_2}\right) \geq m}, \eean
where the last inequality follows observing that $\frac{k_2 - j}{k_1 + k_2 - j}\leq\frac{k_2}{k_1 + k_2}$. The second inequality in \eqnref{neghyper1} is immediate.

Now to prove \eqnref{neghyper2} observe that conditioned in $X_j = k$ the remaining population has size $k_1 + k_2 - k$ and $k_2-(k-j)$ failures. Thus,
\bean\Pr{X_{j +1} - X_j \geq m | X_j = k}
&=& \frac{k_2 - (k - j)}{k_1 + k_2 - k} \,\frac{k_2 - (k - j) - 1}{k_1 + k_2 - k -1}\cdots\frac{k_2 - (k - j) - (m - 2)}{k_1 + k_2 - k - (m - 2)}\\
&\leq& \left( \frac{k_2}{k_1 + k_2} + \frac{j}{k_1 + k_2} \right) ^{m -1}\\ &=& \Pr{\geo\left(\frac{k_1}{k_1 + k_2} - \frac{j}{k_1 + k_2}\right) \geq m},\eean
where the above inequality holds because
\bean\displaystyle \frac{k_2 - (k - j)}{k_1 + k_2 - k} &=& \frac{\frac{k_2}{k_1 + k_2}(k_1 + k_2) - k\frac{k_2}{k_1 + k_2}}{k_1 + k_2 - k} + \frac{-k\frac{k_1}{k_1 + k_2} + j}{k_1 + k_2 - k}\\ &\leq& \frac{k_2}{k_1 + k_2} + \frac{j}{k_1 + k_2}\eean and
$$ \frac{k_2 - (k - j) - l}{k_1 + k_2 - k -l} \leq \frac{k_2 - (k - j)}{k_1 + k_2 - k},$$
for $l = 1, \dots, m - 2.$\end{proof}

The next lemma enables us to dominate a negative hypergeometric random variable by a sum of geometric random variables. Its proof can be found in the appendix.

\begin{lemma}\label{cond} Let $X_1, \dots, X_j$ and $Y_1, \dots, Y_j$ be random variables taking values in $\R$ such that $X_0 \equiv 0$ and $X_{i +1} - X_i| X_i \preceq Y_{i + 1}$, for $ i = 0, \dots, j - 1$, then
$$X_j \preceq \widetilde{Y}_1 + \dots + \widetilde{Y}_j$$
where $\widetilde{Y}_i \sim Y_i$ and $\widetilde{Y}_i$ is independent of $\widetilde{Y}_k$ for $k \neq i$. \end{lemma}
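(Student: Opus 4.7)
My plan is to prove the lemma by induction on $j$, using the characterization of $\preceq$ recalled earlier in Section~2: namely, $X \preceq Y$ holds iff $\Ex{f(X)} \leq \Ex{f(Y)}$ for every bounded continuous increasing test function $f$. The base case $j=1$ is immediate, since $X_0 \equiv 0$ reduces the hypothesis at $i = 0$ to $X_1 \preceq Y_1$, and the right-hand side is just $\widetilde{Y}_1 \sim Y_1$.

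For the inductive step, suppose $X_{j-1} \preceq \widetilde{Y}_1 + \cdots + \widetilde{Y}_{j-1}$ and fix any bounded continuous increasing $f : \R \to \R$. By the tower property,
$$\Ex{f(X_j)} = \Ex{\Ex{f(X_{j-1} + (X_j - X_{j-1})) \mid X_{j-1}}}.$$
Define $g(x) := \Ex{f(x + Y_j)}$. Since $y \mapsto f(x+y)$ is bounded, continuous, and increasing, the hypothesis $X_j - X_{j-1} \mid X_{j-1} \preceq Y_j$ gives
$$\Ex{f(X_{j-1} + (X_j - X_{j-1})) \mid X_{j-1} = x} \leq g(x),$$
so $\Ex{f(X_j)} \leq \Ex{g(X_{j-1})}$. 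The auxiliary function $g$ inherits boundedness and monotonicity from $f$ directly, and is continuous by dominated convergence; hence the inductive hypothesis applies and yields $\Ex{g(X_{j-1})} \leq \Ex{g(\widetilde{Y}_1 + \cdots + \widetilde{Y}_{j-1})}$. By the independence of $\widetilde{Y}_j$ from the other $\widetilde{Y}_i$, the latter quantity equals $\Ex{f(\widetilde{Y}_1 + \cdots + \widetilde{Y}_j)}$, closing the induction.

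The only genuine point to watch is the regularity of $g$; this is routine via dominated convergence once one remembers that $f$ is bounded. An alternative I would keep in reserve is a direct coupling argument: by quantile coupling, at each step one can build $X_i - X_{i-1}$ (drawn from its conditional law given $X_{i-1}$) and a copy $\widetilde{Y}_i$ of $Y_i$ on a common probability space, driven by an independent uniform $U_i$, in such a way that $X_i - X_{i-1} \leq \widetilde{Y}_i$ pointwise. Summing over $i$ gives $X_j \leq \widetilde{Y}_1 + \cdots + \widetilde{Y}_j$ almost surely; one must first note that only the one-step conditional laws $\mathcal{L}(X_i \mid X_{i-1})$ influence the marginal law of $X_j$, so without loss of generality the $X_i$ can be taken Markovian. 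Either route works; I expect the inductive argument above is more direct for the purposes of this paper.
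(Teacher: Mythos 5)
Your proof is correct and follows essentially the same route as the paper's: induction on $j$, conditioning on $X_{j-1}$, using the domination of the conditional law of the last increment to replace it by an independent copy of $Y_j$, and then applying the induction hypothesis. The only difference is technical bookkeeping: you test against bounded continuous increasing functions and smooth via $g(x)=\Ex{f(x+Y_j)}$ (continuity by dominated convergence), whereas the paper tests against the indicator $\mathbf{1}\{\xi+\eta\geq t\}$, which is increasing and upper semicontinuous, and invokes the disintegration theorem and the substitution principle explicitly.
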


We use Lemma \ref{neghyper} combined with Lemma \ref{cond} to obtain that if $X_j \sim NH(k_1 + k_2, k_1, j)$ with $j \leq k_1$, then $X_j \preceq G_1 + \dots + G_j$, where $G_1, \dots, G_j$ are independent geometric random variables with parameter $\displaystyle \frac{k_1}{k_1 + k_2} - \frac{j}{k_1 + k_2}$.

Now, consider the following standard fact about stochastic domination, which is a corollary of the Lemma \ref{cond}.

\begin{corollary}\label{ind} Let $X_1, \dots, X_r, Y_1, \dots, Y_r$ be random variables taking values in $\R$ such that $\{X_1, \dots, X_r\}$ and $\{Y_1, \dots, Y_r\}$ are independent. If $X_i \preceq Y_i$ for $i = 1, \dots, r$, then
$$X_1 + \dots + X_r \preceq Y_1 + \dots + Y_r.$$ \end{corollary}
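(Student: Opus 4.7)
The plan is to reduce the statement to Lemma~\ref{cond} by viewing the sum $X_1 + \cdots + X_r$ as the final term of its own partial-sum sequence. Set $S_0 := 0$ and $S_i := X_1 + \cdots + X_i$ for $i = 1, \dots, r$, and verify that this chain satisfies the hypothesis of Lemma~\ref{cond} with the $Y_i$'s as the dominating variables.

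The key observation is that $S_{i+1} - S_i = X_{i+1}$, and since the $X_j$'s are mutually independent, the conditional distribution of $S_{i+1} - S_i$ given $S_i$ coincides with the unconditional distribution of $X_{i+1}$. Combined with the assumption $X_{i+1} \preceq Y_{i+1}$, this gives $(S_{i+1} - S_i) \mid S_i \preceq Y_{i+1}$, which is exactly the hypothesis needed to invoke Lemma~\ref{cond}.

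Applying the lemma, I obtain independent random variables $\widetilde Y_1, \dots, \widetilde Y_r$ with $\widetilde Y_i \sim Y_i$ such that $S_r \preceq \widetilde Y_1 + \cdots + \widetilde Y_r$. Since the $Y_i$'s are themselves independent with the same marginals, $\widetilde Y_1 + \cdots + \widetilde Y_r$ has the same distribution as $Y_1 + \cdots + Y_r$, yielding the desired stochastic domination.

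There is no serious technical obstacle here; the only subtle point is parsing the independence hypothesis correctly. The phrasing ``$\{X_1, \dots, X_r\}$ and $\{Y_1, \dots, Y_r\}$ are independent'' is best read as asserting that the $X_i$'s are mutually independent and the $Y_i$'s are mutually independent (in addition to the two collections being independent of each other); this is precisely what makes both the identification of $(S_{i+1}-S_i)\mid S_i$ with $X_{i+1}$ and the final replacement of $\widetilde Y_i$ by $Y_i$ legitimate. The corollary is essentially a repackaging of Lemma~\ref{cond} into the form most convenient for the later arguments in the paper.
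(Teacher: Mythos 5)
Your proof is correct and takes essentially the route the paper intends: the paper gives no separate argument beyond declaring the statement a corollary of Lemma~\ref{cond}, and your reduction via the partial sums $S_i = X_1 + \cdots + X_i$, using mutual independence of the $X_j$'s to identify the conditional law of $S_{i+1}-S_i$ given $S_i$ with the law of $X_{i+1}$, is exactly that intended derivation. Your reading of the independence hypothesis (mutual independence within each family, which is what makes both the application of Lemma~\ref{cond} and the final identification $\widetilde Y_1+\cdots+\widetilde Y_r \sim Y_1+\cdots+Y_r$ valid) matches how the paper uses the corollary later.
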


 If $X_j \sim \nh(k_1 + k_2, k_1, j)$ and $X_l \sim \nh(k_1 + k_2, k_1, l)$ are independent random variables, we can use Lemma \ref{ind} to guarantee that $X_j + X_l \preceq G_1 + \dots + G_{j + l}$,  where $G_1, \dots, G_{j + l}$ are independent geometric random variables with parameter $ \frac{k_1}{k_1 + k_2} - \frac{\max\{j,l\}}{k_1 + k_2}$.

At this point, the problem of bounding the probability of the sum of negative hypergeometric random variables deviations above the mean is transformed into the problem of bounding the sum of geometric random variables. In the next lemma we give a bound for the sum of geometric random variables with parameter $1 - \liloh{1}$.

\begin{lemma}\label{expdec} Given any $\epsilon > 0$ and $C> 1$ there exists $\delta>0$ such that if $G_1, \dots, G_r$ are independent random variables with distribution geometric with parameter $p \geq 1 - \delta$, then
$$\Pr{G_1 + \dots + G_r > (1 + \epsilon)r} \leq \exp\left(- (C - 1) r \right).$$
\end{lemma}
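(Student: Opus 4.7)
The plan is a standard exponential (Chernoff-type) moment bound for the sum, after first reducing to the worst-case parameter.

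First I would observe that the map $p\mapsto \geo(p)$ is monotone with respect to stochastic domination: if $p_1\geq p_2$ then $\geo(p_1)\preceq \geo(p_2)$, because the tails $(1-p)^{k-1}$ are increasing in $1-p$. Combined with \corref{ind} (independence of the two families), this lets me assume without loss of generality that $p=1-\delta$, since the $G_i$ are dominated by independent $\geo(1-\delta)$ variables and the event $\{G_1+\dots+G_r>(1+\epsilon)r\}$ is increasing.

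Next I would apply the exponential Markov inequality: for any $t>0$ small enough that the moment generating function is finite,
$$\Pr{G_1+\dots+G_r > (1+\epsilon)r}\leq e^{-t(1+\epsilon)r}\bigl(\mathbb{E}[e^{tG_1}]\bigr)^r = \exp\!\left( r\bigl(\log\mathbb{E}[e^{tG_1}] - t(1+\epsilon)\bigr)\right).$$
For $G\sim\geo(1-\delta)$ one has the standard formula
$$\mathbb{E}[e^{tG}] = \frac{(1-\delta)\,e^t}{1 - \delta\, e^t}\qquad\text{for } t<-\log\delta,$$
so $\log\mathbb{E}[e^{tG}] = t + \log(1-\delta) - \log(1-\delta e^t)$. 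The key observation is that for any \emph{fixed} $t>0$, this quantity tends to $t$ as $\delta\to 0$. Consequently the exponent $\log\mathbb{E}[e^{tG_1}] - t(1+\epsilon)$ tends to $-t\epsilon$ as $\delta\to 0$.

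Given $\epsilon>0$ and $C>1$, I would therefore first choose $t=2(C-1)/\epsilon$ (any $t$ with $t\epsilon>C-1$ works), and then choose $\delta>0$ small enough—depending only on $\epsilon$ and $C$—so that $\log\mathbb{E}[e^{tG_1}] - t(1+\epsilon) \leq -(C-1)$. This is possible by the limit above, and yields exactly the desired bound
$$\Pr{G_1+\dots+G_r>(1+\epsilon)r}\leq \exp(-(C-1)r).$$
There is no real obstacle here; the only point requiring a little care is book-keeping: one must choose $t$ first (large, since $C$ may be large) and \emph{then} $\delta$ small enough that $\delta e^t$ is still well below $1$, so that the MGF is finite and its Taylor expansion is controlled. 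Because $C$ and $\epsilon$ are fixed before $\delta$, this causes no circularity.
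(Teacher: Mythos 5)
Your proof is correct and follows essentially the same route as the paper's: reduce to the worst-case parameter $p=1-\delta$ by stochastic domination, then apply an exponential Markov (Chernoff) bound with a fixed tilt parameter $t$ of order $(C-1)/\epsilon$ and choose $\delta$ small enough that the geometric moment generating function contributes only a negligible factor. The only cosmetic difference is that the paper centers the variables at $1$, takes $t=C/\epsilon$, and bounds the resulting geometric series explicitly by $\left(1+2\delta^{1/2}\right)^r$, whereas you work with the closed-form MGF and a limiting argument in $\delta$; both give the stated bound.
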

\begin{proof}
Begin by taking $\epsilon > 0$ and $C > 1$. If the parameter is $1-\delta$,
\bean
\Pr{G_1 + \dots + G_r \geq (1 + \epsilon)r} &\leq& \exp\left(-\frac{C}{\epsilon}r\epsilon\right)\Ex{\exp\left(\frac{C}{\epsilon}\sum_{i = 1}^{r}(G_i - 1)\right)}\\ &=& \exp(-Cr)\left(\sum_{k = 1}^{\infty}\left(1 - \delta\right)\delta^{k - 1}\exp\left(\frac{C}{\epsilon}(k - 1)\right)\right)^r \\ &\leq& \exp(-Cr)\left(\sum_{k = 1}^{\infty}\left(\delta\exp\left(\frac{C}{\epsilon}\right)\right)^{k - 1}\right)^r.
\eean
For $\delta$ sufficiently small $\delta\exp(\frac{C}{\epsilon}) \leq \delta^{\frac{1}{2}} \leq \frac{1}{2}$ and so
\bean
\exp(-Cr)\left(\sum_{k = 1}^{\infty}\left(\delta\exp\left(\frac{C}{\epsilon}\right)\right)^{k - 1}\right)^r &=& \exp(-Cr)\left(\frac{1}{1 - \delta^{\frac{1}{2}}}\right)^r \\ &\leq& \exp(-Cr)\left(1 + 2\delta^{\frac{1}{2}}\right)^r \\
&\leq& \exp\left(-\left(C - 2\delta^{\frac{1}{2}}\right)r \right)\\
&\leq& \exp\left(-\left(C - 1\right)r \right).
\eean
This gives the result if the parameter of the geometrics is $1-\delta$. The case $p\geq 1-\delta$ follows via stochastic domination.\end{proof}

\section{Proof of \thmref{push}}

We begin by defining another process, called {\it push without replacement} and denoted by {\small PWR} or {\small PWR($G$)}: at time $t = 0$ an arbitrary vertex knows an information. In the succeeding time steps each informed vertex chooses a neighbor independently and uniformly at random from its neighbors not yet chosen and fowards the information according to this list.

Formally, let $J(t)$ be the set of vertices informed by {\small PWR} at time t. Initially $t = 0$ and $J(0) = \{v\}$, for some choice of $v \in V$. While $J(t) \neq V$, each vertex $u \in J(t)$ chooses a neighbor $v_u^t$ independently and uniformly at random in $\Gamma(u)\backslash\{v_u^s;0<s<t\}$. The new informed set is
$$J(t + 1) = J(t) \cup \{v_u^t;u \in J(t)\}.$$ The process stops when $J(t) = V$.

Define $\sJ_v (G) = \min \{t \in \mathbb{N} | J(t) = V\}$ as the first time step in which every vertex of $G$ has been informed by the {\small PWR}. The next result relates $\sT_v (G)$ and $\sJ_v(G)$. We assume that $I(0) = J(0) = \{v_n\}$ and throughout this section we denote $G = G_n, p = p_n, T = T_n, d = d_n$ and $v=v_n$, we also omit $v$ from $\sT_v (G)$ and $\sJ_v(G)$.

\begin{proposition}\label{L1} For any graph $G$ we have that $\sJ(G) \preceq \sT(G)$. Moreover, let $G_n = (V_n, E_n)$ be a sequence of $d_n$-regular graphs, with $n \rightarrow \infty$, and $T_n = \liloh{d_n}$ such that $\sJ(G_n) \leq T_n$ with high probability. Then given any $\epsilon > 0$, $\sT(G_n) \leq (1 + \epsilon) T_n$ with high probability.\end{proposition}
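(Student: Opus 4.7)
For part (a), I would maintain the invariant $S_u^{\text{PWR}}\supseteq S_u^{\text{push}}$ at every vertex $u$, where $S_u^{(\cdot)}$ denotes the set of neighbors of $u$ already selected by $u$ under the corresponding process. In each round where $u$ is active in both processes, push picks a uniformly random $v\in\Gamma(u)$; I then set PWR's choice equal to $v$ if $v\notin S_u^{\text{PWR}}$, and otherwise to an independent uniform element of $\Gamma(u)\setminus S_u^{\text{PWR}}$. A short calculation checks this produces the correct uniform conditional distribution for PWR, and in both cases the invariant is preserved at $u$. Induction on $t$ then gives $J(t)\supseteq I(t)$ for all $t$, so $\sJ(G)\leq\sT(G)$ pointwise and hence $\sJ(G)\preceq\sT(G)$.

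For part (b), write $s_{\text{PWR}}(v)$ and $s_{\text{push}}(v)$ for the informing times of $v$ in PWR and push respectively. I would use a coupling built from mutually independent ingredients: a uniformly random permutation $\pi_u$ of $\Gamma(u)$ at each vertex $u$, and independent random variables $G_{u,j}\sim\geo((d_n-j+1)/d_n)$ for each $u$ and $j\geq 1$. In PWR, $u$ pushes to $\pi_u(1),\pi_u(2),\dots$ at successive rounds after being informed. In push, I declare the $j$-th distinct choice of $u$ to be $\pi_u(j)$, happening $G_{u,1}+\cdots+G_{u,j}$ rounds after $u$ is informed; the repeats in between can be assigned arbitrarily (e.g.\ uniformly over $\pi_u(1),\dots,\pi_u(j-1)$), and the joint distribution of push's choices at $u$ remains correct because, conditional on there being $k$ distinct picks so far, those picks are uniform over $k$-subsets of $\Gamma(u)$. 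For each $v$, let $v_n=u_0\to u_1\to\cdots\to u_m=v$ be its informer path in PWR, with $u_{i+1}=\pi_{u_i}(l_i)$, so that $s_{\text{PWR}}(v)=\sum_i l_i$; telescoping along the path yields
$$ s_{\text{push}}(v)\ \leq\ \sum_{i=0}^{m-1}\sum_{j=1}^{l_i} G_{u_i,j}. $$

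On the event $\{\sJ(G_n)\leq T_n\}$ (which has probability $1-\liloh{1}$ by hypothesis), this sum has at most $T_n$ terms and every index $j$ is at most $T_n$. Hence each $G_{u_i,j}\succeq \geo(1-T_n/d_n)$, the $(u_i,j)$ pairs along the path are pairwise distinct, and the $G$'s are independent of the $\pi_u$'s. Conditional on the PWR process, Corollary~\ref{ind} then gives $s_{\text{push}}(v)\preceq G_1^*+\cdots+G_{T_n}^*$ with $G_k^*$ i.i.d.\ $\geo(1-T_n/d_n)$. Because $T_n=\liloh{d_n}$, this parameter is $1-\liloh{1}$, so Lemma~\ref{expdec} with a sufficiently large $C$ yields $\Pr{s_{\text{push}}(v)>(1+\epsilon)T_n\mid\text{PWR}}\leq \exp(-(C-1)T_n)\leq n^{-2}$, using $T_n\geq\log_2 n$ (since the informed set at most doubles per round). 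A union bound over the $n$ vertices completes the argument. The main difficulty is designing the permutation coupling so that $s_{\text{push}}(v)$ decomposes into a sum of at most $T_n$ nearly-parameter-$1$ geometrics that are independent of the PWR process; once this is in place, the stochastic-domination and concentration steps follow routinely from the preliminary lemmas.
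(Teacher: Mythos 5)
Your proposal is correct and takes essentially the same route as the paper: you decompose push at each vertex into a uniform permutation of its neighbours plus independent geometric gaps with parameters $1-(j-1)/d_n$ (the paper's Claim~\ref{claim}), telescope along the PWR informer path, dominate by $T_n$ i.i.d.\ $\geo(1-T_n/d_n)$ variables via Corollary~\ref{ind}, and finish with Lemma~\ref{expdec} and a union bound over vertices, using $T_n\geq\log_2 n$. The only cosmetic difference is your round-by-round coupling for the first assertion, where the paper instead derives the PWR order from the list of distinct push choices and uses that single coupling for both parts.
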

\begin{proof} Given any graph $G$, we start building a coupling between the push and the {\small PWR} in $G$ as follows. For each $u \in V$, let $\Theta^u = \{\Theta^u_i\}_{i =1}^{\infty}$ be a sequence of independent random variables with uniform distribution in $\Gamma(u)$. Run the push in $G$ according to the realization of $\{\Theta^u\}_{u \in V}$ in the following sense: the first neighbor to be chosen by a vertex $u$ is $\Theta^u_1$, the second is $\Theta^u_2$ and so on.

Now, for $u \in V$, define $X^u_1 = 1$ and, for each $k \in \N$,
$$X_k^u = \inf\{m \in \N| \Theta^u_m \notin \{\Theta_{X_1^u}^u, \dots, \Theta_{X^u_{k - 1}}^u\}\}.$$
Also, define $U_k^u = \Theta^u_{X^u_k}$. We will use the following fact (proof omitted):

\begin{claim}\label{claim} For an arbitrary $u \in V$, let ${\rm deg}(u) = s$. The random variable $(U^u_j)_{j = 1}^s$ has uniform distribuition in the permutations of $\Gamma(u)$. Moreover, for each $k \leq s$, $X_k^u - X_{k - 1}^u$ has geometric distribution with parameter $1 - \frac{k - 1}{s}$ and the vector $(U_j^u)_{j =1}^{s}$ and the random variables $X_1^u, X_2^u - X_1^u, \dots, X^u_s - X^u_{s - 1}$ are mutually independent.
\end{claim}
%

Now, run the {\small PWR} according to the realization of $\{(U_1^u, \dots, U^u_{{\rm deg}(u)})\}_{u \in V}$ in the following sense: each vertex $u$ informs its neighbors in the order given by the list $(U_1^u, \dots, U^u_{{\rm deg}(u)})$.

With the processes constructed in this way, we have $I(t) \subset J(t)$ for all $t \in \N$, which implies $\sJ(G) \preceq \sT(G)$ and we have the first part of the Lemma.

It remains to prove the second assertion. Let $v = I(0) = J(0)$ and $G$ a $d$-regular graph with $T = \liloh{d}$, then

\bean \Pr{\sT(G) > (1 + \epsilon)T} &=& \Pr{\sT(G) > (1 + \epsilon)T, \sJ(G) \leq T} \\
&& +\  \Pr{\sT(G) > (1 + \epsilon)T, \sJ(G) > T}\\
&=& \Pr{\sT(G) > (1 + \epsilon)T, \sJ(G) \leq T} + \liloh{1} \\
&=& \Pr{\exists \,w \in V; \sT_{v, w}(G) > (1 + \epsilon)T, \sJ(G) \leq T} + \liloh{1}\\
&\leq& n\cdot\Pr{ \sT_{v, w'}(G) > (1 + \epsilon)T, \sJ(G) \leq T} + \liloh{1},
\eean
where $\sT_{v, w}(G) = \min\{t \in \N| w \in I(t)\}$ is the first time step in which $w$ is informed and $w'$ is defined by the equality $\max_{w \in V}\Pr{ \sT_{v, w}(G) > (1 + \epsilon)T} = \Pr{ \sT_{v, w'}(G) > (1 + \epsilon)T}$.

Observe that
$$\mathbf{1}\{\sT_{v, w'}(G)>(1 + \epsilon)T, \sJ(G)\leq T\} = f(\{X^u_k - X^u_{k - 1}\}_{k = 1, u \in V}^{k = d}, \{(U_{X_k^u})_{k = 1}^d\}_{u \in V})$$
where $f$ is a measurable function. Recall that $\{X^u_k - X^u_{k - 1}| 1 \leq k \leq d, u \in V\}$ and $\{(U_{X_k^u})_{k = 1}^d| u \in V\}$ are mutually independent, then by the substitution principle, we have that
\bean \Pr{ \sT_{v, w'}(G) > (1 + \epsilon)T, \sJ(G) \leq T} &=& \Ex{\Ex{f(\{X^u_k - X^u_{k - 1}\}_{k, u}, \{(U_{X_k^u})_k^d\}_{u})|\{(U_{X_k^u})_{k}^d\}_{u}}}\\ &=&\Ex{\phi(U_{X_k^u})_k\}_u)},\eean
where $\phi(\{(x^u_k)_k\}_u = \Ex{f(\{X^u_k - X^u_{k - 1}\}_{k, u}, \{(x^u_k)_k\}_u)}$.

Fixed the realization $\{(U_{X_k^u})_k\}_u = \{(x^u_k)_k\}_u$, the event $\{\sJ(G)\leq T\}$ implies the existence of a path $\gamma: v = v_0\sim\dots\sim v_l=w'$ such that $J_{v_0, v_1} + \dots + J_{v_{l - 1}, v_l} \leq T$, where $J_{v_j, v_{j +1}}$ is the first time step in which $v_j$ choose $v_{j +1}$ to transmit the information in the {\small PWR}. As $\{\sT_{v, w'}(G) > (1 + \epsilon)T\}$, defining $T_{v_j, v_{j + 1}}$ as the first time step in which $v_j$ choose $v_{j +1}$ to transmit the information in the push in $G$, we have that $T_{v_0, v_1} + \dots + T_{v_{l - 1}, v_l} \geq (1 + \epsilon)T$. So,
$$\Ex{f(\{X^u_k - X^u_{k - 1}\}_{k, u}, \{(x^u_k)_k\}_u)} \leq \Pr{T_{v_0, v_1} + \dots + T_{v_{l - 1}, v_l} \geq (1 + \epsilon)T}.$$

To bound the probability above we find the distribution of $T_{v_j, v_{j + 1}}$,
\bean
T_{v_j, v_{j + 1}} &=& X^{v_j}_{J_{v_j, v_{j + 1}}} \\
&=& X^{v_j}_1 + (X^{v_j}_2 - X^{v_j}_1) + \dots + (X^{v_j}_{t_{v_j, v_{j + 1}}} - X^{v_j}_{t_{v_j, v_{j + 1}} - 1})\\
&\sim& G^j_1 + \dots + G^j_{J_{v_j, v_{j + 1}}}
\eean
where $G^j_1, \dots , G^j_{J_{v_j, v_{j + 1}}}$, by the Claim \ref{claim}, are independent random variables such that $G^j_k$ has geometric distribution with parameter $1 - \frac{k - 1}{d}$, for each $k = 1, \dots, J_{v_j, v_{j + 1}}$.

As $\geo(1 - \frac{k - 1}{d}) \preceq \geo(1 - \frac{T}{d})$ and $J_{v_0, v_1} + \dots + J_{v_{l - 1}, v_l} \leq T$, using Lemma \ref{ind} we have
$$T_{v_0, v_1} + \dots + T_{v_{l - 1}, v_l} \preceq G_1' + \dots + G_T',$$
where $G_1', \dots,G_T'$ are independent geometric random variables with parameter $1 - \frac{T}{d}$. Then,
\begin{equation}\label{geo}
\Pr{T_{v_0, v_1} + \dots + T_{v_{l - 1}, v_l} \geq (1 + \epsilon)T} \leq \Pr{G_1' + \dots + G_T' \geq (1 + \epsilon)T}.
\end{equation}

Now, using Lemma \ref{expdec} with $C> 1 + \ln 2$ and $n$ sufficiently large for $\frac{T}{d} < \delta$,
\bean
\Pr{G_1' + \dots + G_T' \geq (1 + \epsilon)T} \leq \exp\left(-(C - 1)T\right),
\eean
as $T > \log_2n$ we have that
\bean
\Pr{G_1' + \dots + G_T' \geq (1 + \epsilon)T} &\leq& \exp\left(-(C - 1)\frac{\ln n}{\ln 2}\right)\\
&=& \liloh{n^{-1}}.
\eean

Thus, $n\cdot\Pr{ \sT_{v, w'}(G) > (1 + \epsilon)T, \sJ(G) \leq T} = \liloh{1}$ and the proof is finished. \end{proof}

The next lemma enables us to build a coupling between the {\small PWR} in $G$ and the {\small PWR} in $G_p$.

\begin{lemma}\label{coupling} Let $G_n$ be a sequence of $d_n$-regular graphs, $T_n$ and $p_n$ as in \thmref{push}. Take $C=\left(\frac{pd}{T}\right)^{\frac{1}{2}}$, then, for each $u \in V$, there exist random variables $N_u^p$ with distribution $Bin(d, \frac{CT}{d})$, $N_u$ with distribution $Bin(d, \frac{CT}{d}(1-\frac{CT}{pd}))$, and random elements $\sN_u$ in $\Gamma(u)$, $\sN_u^p$ in $\Gamma_p(u)$ such that:
\bea\label{PI}\Pr{\sN_u = S|N_u = k} = \frac{1}{\dbinom{d}{k}}, \text{ for each } S \subset \Gamma(u) \text{ with } |S| = k,\eea
and
\bea\label{PII}\Pr{\sN_u^p = S'|N_u^p = j} = \frac{1}{\dbinom{deg_p(u)}{j}}, \text{ for each } S' \subset \Gamma_p(u) \text{ with } |S'| = j.\eea
Moreover, $\sN_u \subset \sN_u^p$ and there exists $\delta =\delta_n\rightarrow 0$ as $n\rightarrow \infty$ such that
\bea\label{PIII}\Pr{|\sN_u^p\backslash \sN_u| \leq \delta\cdot|\sN_u|,\forall u\in V} = 1 - \liloh{n^{-1}}.\eea\end{lemma}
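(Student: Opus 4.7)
The plan is to realise all randomness through independent Bernoullis attached to the edges of $G$ and read off $G_p$, $\sN_u^p$, and $\sN_u$ as deterministic functions thereof. Concretely, for each $\{u,w\}\in E$ introduce $A_{uw}\sim\text{Bern}(p)$, so that $G_p$ has edges $\{u,w\}$ with $A_{uw}=1$, and for each \emph{ordered} pair $(u,w)$ with $\{u,w\}\in E$ an additional $I_{u\to w}\sim\text{Bern}(CT/(pd))$, all of these jointly independent. Then set
\begin{equation*}
\sN_u^p := \{w\in\Gamma(u) : A_{uw}=1,\,I_{u\to w}=1\},\qquad \sN_u := \{w\in\sN_u^p : I_{w\to u}=0\},
\end{equation*}
with $N_u^p := |\sN_u^p|$ and $N_u := |\sN_u|$. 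The extra requirement $I_{w\to u}=0$ when defining $\sN_u$ is the key sprinkling device: it trims $\sN_u^p$ by an independent biased coin flip at each neighbour and decouples the membership of $w$ in $\sN_u$ from that of $u$ in $\sN_w^p$, which is the source of correlations that caused trouble in the undirected setting.

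For the distributional claims, the indicator $\mathbf{1}[w\in\sN_u^p]=A_{uw}I_{u\to w}$ is a product of two independent Bernoullis with parameters $p$ and $CT/(pd)$, hence itself $\text{Bern}(CT/d)$; similarly $\mathbf{1}[w\in\sN_u]=A_{uw}I_{u\to w}(1-I_{w\to u})$ is $\text{Bern}((CT/d)(1-CT/(pd)))$. Because the triples $(A_{uw},I_{u\to w},I_{w\to u})$ attached to distinct $w\in\Gamma(u)$ involve disjoint edges and hence disjoint Bernoullis, these indicators are mutually independent in $w$. Summing produces the claimed binomial laws for $N_u^p$ and $N_u$; uniformity \eqnref{PI} is immediate from the i.i.d.\ structure of the events $\{w\in\sN_u\}_{w\in\Gamma(u)}$, and uniformity \eqnref{PII} follows because, conditional on $\{A_{uw}\}_{w\in\Gamma(u)}$ (i.e., on $\Gamma_p(u)$), the variables $\{I_{u\to w}\}_{w\in\Gamma_p(u)}$ remain i.i.d. Inclusion $\sN_u\subset\sN_u^p$ is immediate from the definitions.

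The quantitative heart of the proof is \eqnref{PIII}. The size $|\sN_u^p\setminus\sN_u|=\#\{w:A_{uw}=I_{u\to w}=I_{w\to u}=1\}$ is $\text{Bin}(d,p(CT/(pd))^2)$, whose mean equals $C^2T^2/(pd)=T$ since $C^2=pd/T$. In contrast, $\Ex{N_u}=CT(1-1/C)\sim CT=(pdT)^{1/2}$, which is much larger than $T$ because $C\to\infty$ under $T=\liloh{pd}$. Two multiplicative Chernoff bounds---one giving $N_u\geq CT/2$ and one giving $|\sN_u^p\setminus\sN_u|\leq\sqrt{C}\,T$ each with probability at least $1-\exp(-\Omega(CT))$---combine to show that, on the intersection of the good events, $|\sN_u^p\setminus\sN_u|/|\sN_u|\leq 2/\sqrt{C}$. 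Therefore the choice $\delta_n := 2/\sqrt{C} = 2(T/(pd))^{1/4}\to 0$ is the natural one.

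The main obstacle is closing the union bound over all $n$ vertices, since $T$ need only exceed the trivial doubling bound $\log_2 n$. The saving observation is that $CT=(pdT)^{1/2}$ satisfies $CT/\log n \geq C\to\infty$ under the hypothesis $T=\liloh{pd}$, so both Chernoff tails are actually $\liloh{n^{-K}}$ for every $K$ and the union bound over $u\in V$ is comfortable. Once this verification is made, the construction above fulfils all four requirements of the lemma.
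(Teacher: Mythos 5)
Your construction is correct and does prove the lemma as stated, but it is genuinely different from the paper's. The paper also starts from $A_e\sim Be(p)$ and $I'_{u\to w}\sim Be(\frac{CT}{pd})$ and sets $\sN^p_u=\{w: A_{uw}I'_{u\to w}=1\}$, but it obtains $\sN_u$ by applying Strassen's theorem edge by edge: it checks that a pair of independent $Be(q)$ variables with $q=\frac{CT}{d}(1-\frac{CT}{2pd})$ is stochastically dominated by the (positively correlated) pair $(I^p_{u\to w},I^p_{w\to u})$, and extracts coupled indicators $I_{u\to w}\leq I^p_{u\to w}$; the difference $|\sN^p_u\setminus\sN_u|$ is then $Bin\left(d,\frac{1}{2p}\left(\frac{CT}{d}\right)^2\right)$ and the same Chernoff/union-bound computation you perform (with $\Ex{N_u}\gg \ln n$, $\delta_n\to 0$) finishes the proof. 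Your route replaces Strassen by an explicit thinning, $\sN_u=\{w\in\sN^p_u: I_{w\to u}=0\}$, which makes $\mathbf{1}[w\in\sN_u]$ a product of three independent Bernoullis; this is more elementary, and it even reproduces the parameter $\frac{CT}{d}\left(1-\frac{CT}{pd}\right)$ stated in the lemma exactly, whereas the paper's own proof yields $1-\frac{CT}{2pd}$ (a harmless discrepancy). Your tail estimates (mean of the overflow exactly $T$, $N_u\gtrsim CT$, $\delta_n=2C^{-1/2}$, and $CT/\ln n\to\infty$ via $T\geq\log_2 n$) are sound. The one thing the Strassen construction buys that yours gives up is cross-vertex independence: in the paper the family $\{I_{u\to w}\}$ over ordered pairs is i.i.d., so the sets $\{\sN_u\}_{u\in V}$ are mutually independent, while in your construction $\mathbf{1}[w\in\sN_u]$ and $\mathbf{1}[u\in\sN_w]$ can never both equal $1$, so $\sN_u$ and $\sN_w$ are dependent. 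The lemma does not ask for this independence, so your proof of the statement stands; but Proposition 3.2 implicitly uses it when the orderings $Ord_u$ built from the $\sN_u$ are taken to drive independent choices in the PWR processes, so your coupling could not be substituted into the rest of the argument without revisiting that step.
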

\begin{proof}
Begin by taking independent random variables $\{I'_{u\rightarrow v}|(u,v)\in V^2,u\sim v\}$ with distribution $Be(\frac{CT}{pd})$, also take, for each $e\in e(G)$, independent random variables $A_e$ with distribution $Be(p)$ (independent of the $I'$s).

Now defining, for each $(u, v)\in V^2$ with $u\sim v$, $I^p_{u\rightarrow v}=A_eI'_{u\rightarrow v}$ and, for each $u\in V$, $\sN_u^p=\{v\sim u|I^p_{u\rightarrow v}=1\}$ and $N^p_u=|\sN_u^p|$, we have that $I^p_{u\rightarrow v}$ has distribution $Be(\frac{CT}{d})$ and therefore $N_u^p$ has distribution $Bin(d, \frac{CT}{d})$ and $\sN_u^p$ satisfies (\ref{PII}).

We will use the following general fact to build $N_u$ and $\sN_u$:\\

\noindent{\bf Strassen's Lemma} {\it Let $\mu$ and $\nu$ be distributions on $\R^2$ such that $\mu\preceq\nu$. Then, there exists a coupling $(X, Y)$ of the $(\mu, \nu)$ such that $X\leq Y$ in the coordinatewise partial order.}\\

Let $\mu$ be the law of $(I^p_{u\rightarrow v}, I^p_{v\rightarrow u})$ and $\nu$ the distribution of $(B_1, B_2)$ where $B_1, B_2$ are independent random variables $Be(q)$ such that $q= \frac{CT}{d}(1-\frac{CT}{2pd})$. To show that $\nu\preceq\mu$, by the symmetry of the distributions, it suffices to show that $S_2\preceq S_1$, where $S_1=I^p_{u\rightarrow v}+I^p_{v\rightarrow u}$ and $S_2=B_1+B_2$.

Our choice of $q$ implies, by simple calculations, that $\Pr{S_2\geq2}\leq\Pr{S_1\geq2}$ and $\Pr{S_2\geq1}\leq\Pr{S_1\geq1}$ so, $S_2\preceq S_1$. Now, using Strassen's Lemma, we obtain that there exists $(I_{u\rightarrow v}, I_{v\rightarrow u})$ with distribution $Be(q)\times Be(q)$ and such that $I_{u\rightarrow v}\leq I^p_{u\rightarrow v}$ and $I_{v\rightarrow u}\leq I^p_{v\rightarrow u}$. Moreover, as
$$\Pr{I^p_{u\rightarrow v}-I_{u\rightarrow v}=1}=\Ex{I^p_{u\rightarrow v}-I_{u\rightarrow v}}=\frac{1}{2p}\left(\frac{CT}{d}\right)^2$$
we have that $(I^p_{u\rightarrow v} - I_{u\rightarrow v})$ has distribution $Be\left(\frac{1}{2p}\left(\frac{CT}{d}\right)^2\right)$.

Defining, for each $u\in V$, $\sN_u=\{v\sim u|I_{u\rightarrow v}=1\}$ and $N_u=|\sN_u|$. It immediately follows that $N_u$ has distribution $Bin\left(d, \frac{CT}{d}\left(1-\frac{CT}{2pd}\right)\right)$, $\sN_u^p$ satisfies (\ref{PI}) and $\sN_u \subset \sN_u^p$. It remains to prove (\ref{PIII}).

Let us prove (\ref{PIII}). Start by choosing $\delta=\max\left\{C^{-\frac{1}{2}},\left(\frac{CT}{pd}\right)^{\frac{1}{2}}\right\}$ and defining $N_u^*=|\sN_u^p\backslash \sN_u|$ that has distribution $Bin\left(d, \frac{1}{2p}\left(\frac{CT}{d}\right)^2\right)$. Let $A$ be the event $\{|N_u-\Ex{N_u}|\leq\frac{1}{2}\Ex{N_u}\}$. As $T\geq\log_2n$ and $C\rightarrow\infty$ we have that $\Ex{N_u}\gg\ln n$. So, we can use Chernoff bounds for the binomial distribution to obtain $\Pr{A^c}=\liloh{n^{-2}}$. Thus,
\bean\Pr{N_u^*>\delta N_u}&=&\Pr{N_u^*>\delta N_u,A}+\Pr{A^c}\\
&\leq&\Pr{N_u^*>\delta \frac{3\Ex{N_u}}{2}} + \liloh{n^{-2}}\\
&=&\liloh{n^{-2}}
\eean
where in the last inequality we use Chernooff bounds again and our choice of $\delta$ which ensures $\delta\cdot\Ex{N_u}\gg\max\{\ln n,\Ex{N_u^*}\}$. This finishes the proof of Lemma.
\end{proof}

The next result studies the relation between $\sJ(G)$ and $\sJ(G_p)$.

\begin{proposition}\label{L2} Let $G_n = (V_n, E_n)$ be a sequence of $d_n$-regular graphs, with $n \rightarrow \infty$, that satisfies $\sJ(G_n) \leq T_n$ with high probability. Then, given $\epsilon > 0$ and choosing $p_n = \liloh{1}$ such that $T_n = \liloh{p_n\cdot d_n}$, we have that $\sJ(G_{n,{p_n}}) \leq (1 + \epsilon)T_n$ with high probability. \end{proposition}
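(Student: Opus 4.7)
The plan is to promote the set-level coupling of Lemma \ref{coupling} to an ordering-level coupling, and then show that the extra ``red'' elements $\sN_u^p\setminus\sN_u$ slow the $G_p$-protocol down only by a factor $1+\epsilon$ along every informing path. For each $u$, draw a single uniform random permutation $\pi_u^p$ of $\sN_u^p$ and let $\pi_u$ be its subsequence consisting of the ``blue'' elements $\sN_u$. Append independent uniform random permutations of $\Gamma(u)\setminus\sN_u$ (resp.\ $\Gamma_p(u)\setminus\sN_u^p$) and use these as the calling orders of {\small PWR} on $G$ (resp.\ $G_p$); by (\ref{PI})--(\ref{PII}) both protocols have their correct distributions, and crucially the blue calls of $u$ appear in the same relative order in the two protocols.

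Work on the event $E_1=\{\sJ(G)\leq T\}\cap\{|\sN_u^p\setminus\sN_u|\leq\delta|\sN_u|,\ \forall u\}\cap\{|\sN_u|\geq CT/2,\ \forall u\}$, which holds w.h.p.\ by the hypothesis on $G_n$, by Lemma \ref{coupling}, and by a Chernoff-plus-union-bound on the binomials $N_u$ (whose mean is $\Theta(CT)\gg\ln n$ since $C=(pd/T)^{1/2}\to\infty$). Fix $w\in V$ and let $v=v_0\to v_1\to\cdots\to v_\ell=w$ be the simple path by which {\small PWR}$(G)$ informs $w$, with $j_i=t^G_{v_{i+1}}-t^G_{v_i}$ and $\sum_i j_i\leq T$. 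Since $j_i\leq T\leq|\sN_{v_i}|$ on $E_1$, each $v_{i+1}$ is the $j_i$-th blue of $\pi_{v_i}^p$ and therefore lies in $\sN_{v_i}\subset\Gamma_p(v_i)$. Letting $k_i$ denote the position of $v_{i+1}$ in $\pi_{v_i}^p$, an easy induction gives $t^{G_p}_w\leq\sum_i k_i$, so it suffices to prove $\Pr{\sum_i k_i>(1+\epsilon)T\mid E_1}=o(n^{-1})$ for each fixed $w$ and union-bound over the $n$ choices of $w$.

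This is where the negative-hypergeometric machinery of Section~2 does its work. Conditionally on the set sizes, $k_i\sim\nh(|\sN_{v_i}|+|\sN_{v_i}^p\setminus\sN_{v_i}|,\,|\sN_{v_i}|,\,j_i)$, and Lemmas \ref{neghyper}--\ref{cond} dominate each $k_i$ stochastically by a sum of $j_i$ iid geometrics of parameter at least $1-\eta_n$, where $\eta_n=2/C+\delta\to 0$ on $E_1$. The $v_i$'s on a simple path are distinct, so the $k_i$'s are conditionally independent; Corollary \ref{ind}, together with $\sum_i j_i\leq T$, then yields
\begin{equation*}
\sum_i k_i\ \preceq\ \widetilde G_1+\cdots+\widetilde G_T,\qquad \widetilde G_m\ \text{iid}\ \sim\geo(1-\eta_n).
\end{equation*}
Applying Lemma \ref{expdec} with some constant $C_1>1+\ln 2$ then bounds the right-hand tail by $\exp(-(C_1-1)T)\leq n^{-(C_1-1)/\ln 2}=o(n^{-1})$, since $T\geq\log_2 n$. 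The union bound over $w\in V$ combined with $\Pr{E_1^c}=o(1)$ finishes the proof.

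The main obstacle I expect is controlling the cumulative delay $\sum_i(k_i-j_i)$ along a possibly long path, each vertex of which can contribute a red gap individually much larger than $T$; a naive path-wise bound is far too weak. Everything rests on the quantitative choice $C=(pd/T)^{1/2}$ inside Lemma \ref{coupling}: it is just large enough that $|\sN_u|\gg T$ (so informing paths stay entirely blue) and yet small enough that the dominating geometric parameter $1-\eta_n$ approaches $1$ fast enough for Lemma \ref{expdec} to beat the extra union-bound factor of $n$.
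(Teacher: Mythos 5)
Your proposal is correct and follows essentially the same route as the paper: the same ordering-level coupling in which the $G_p$-order of $\sN_u^p$ preserves the order of $\sN_u$, the same good event (from Lemma \ref{coupling} plus Chernoff bounds on $N_u,N_u^p$), the same path decomposition identifying each delay in $G_p$ as a negative hypergeometric $\nh(N^p_{v_j},N_{v_j},J_{v_j,v_{j+1}}(G))$, and the same domination by $T$ independent geometrics with parameter $1-\liloh{1}$ fed into Lemma \ref{expdec} to beat the union bound over the $n$ target vertices. The only differences are presentational (the paper makes the conditioning explicit via the measure $\mathbb{Q}$ on $\{Ord_u\},\{N_u\},\{N_u^p\}$, while you condition on set sizes and call positions directly), not mathematical.
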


\begin{proof} We begin building a coupling between the {\small PWR} in the graph $G$ and the {\small PWR} in the graph $G_p$ using the previous lemma as follows. Take random variables $N_u$, $N_u^p$ and random elements $\sN_u$, $\sN_u^p$ as in Lemma \ref{coupling} and fix an arbitrary order $\{1, \dots, n\}$ of the vertices of $G$. For each $u \in V$, order $\Gamma(u)$ in the following way:
\begin{itemize}
    \item order $\sN_u$ uniformly (from 1 to $N_u$);
    \item order $\Gamma(u)\backslash\sN_u$ uniformly (from $N_u + 1$ to $deg(u)$).
\end{itemize}

Equation (\ref{PI}) ensures that this is a uniform ordering of the neighbors of $u$. Denote by $Ord_u = Ord_u(w^u_1, \dots,w^u_d)$, where $w_1^u<\dots<w_d^u$ are the neighbors of $u$, the random vector built by the manner above. Run the {\small PWR$(G)$} as follows. If $t$ is the first time that the vertex $u$ receives information then in step $t+1$ the vertex $u$ informs $w_{i_1}^u$ if the $i_1-$th coordinate of $Ord_u(w^u_1, \dots,w^u_d)$ is equal to $1$, in step $t+2$ the vertex $u$ informs $w_{i_2}^u$ if the $i_2-$th coordinate of $Ord_u(w^u_1, \dots,w^u_d)$ is equal to $2$ and so on.

Let us run the {\small PWR$(G_p)$} similarly. For each $u \in V$, order $\Gamma_p(u)$ in the following way:
\begin{itemize}
    \item order $\sN^p_u$ uniformly, but conditioned to coincide with the order of $\sN_u$ (from 1 to $N^p_u$);
    \item order $\Gamma(u)\backslash\sN^p_u$ uniformly (from $N^p_u + 1$ to $deg_p(u)$).
\end{itemize}

Equation (\ref{PII}) ensures that this is a uniform ordering of the neighbors of $u$. Denote by $Ord_u^p = Ord_u^p(\tilde{w}^u_1, \dots,\tilde{w}^u_{deg_p(u)})$, where $\tilde{w}_1^u<\dots<\tilde{w}_{deg_p(u)}^u$ are the neighbors of $u$ in $\Gamma_p(u)$, the random vector built by the manner above. Run the {\small PWR$(G_p)$} using $\{Ord_u^p\}_{u \in V}$.

Now we will prove that if $\sJ(G)\leq T$ with high probability then $\sJ(G_p)\leq (1+\epsilon)T$ with high probability. Take $\epsilon > 0$, we have that
\bea\label{eqb}\Pr{\sJ(G_p) > (1+ \epsilon)T}  =
 \Pr{\sJ(G_p) > (1+ \epsilon)T, A} + \Pr{\sJ(G_p) > (1+ \epsilon)T, A^c},\eea
where $A$ is the event $\{\sJ(G) \leq T\}\cap\{N_u > \frac{CT}{2}, |\sN_u^p\backslash \sN_u| \leq \delta\cdot|\sN_u|, N_u^p>\frac{CT}{2}, \,\forall u\in V\}$ with $C = \left(\frac{pd}{T}\right)^{\frac{1}{2}}$ and $\delta=\delta_n$ as in Lemma \ref{coupling}. First, we analyze the term $\Pr{\sJ(G_p) > (1 + \epsilon)T, A^c}$:
\bean \Pr{\sJ(G_p) > (1 + \epsilon)T, A^c} &\leq& \Pr{\sJ(G) \geq T}\\ && +\, \Pr{\exists u ;N_u \leq \frac{CT}{2}}\\&& +\, \Pr{\exists u; N_u^p\leq\frac{CT}{2}}\\&& +\, \Pr{\exists u;|\sN_u^p\backslash \sN_u| > \delta\cdot|\sN_u|}.
\eean
By hypothesis $\Pr{\sJ(G) \geq T}=\liloh{1}$. Using Chernoff bounds for the binomial distribution, $T\geq \log_2n$ and $C \rightarrow\infty $ we have that $\Pr{N_u \leq \frac{CT}{2}} = \liloh{n^{-1}}$ and that
$\Pr{N^p_u \leq \frac{CT}{2}} = \liloh{n^{-1}}.$
So,
\bean \Pr{\sJ(G_p) > (1 + \epsilon)T, A^c} = \liloh{1}.\eean

Next, let us estimate the other term in equation (\ref{eqb}). For this, let $\sJ_{v, u}(G) = \min\{t \in \N| u \in J(t)\}$ be the first time step in which $u$ is informed by {\small PWR$(G)$} and let $w'$ be the vertex such that $$\Pr{\sJ_{v,w'}(G_p) > (1+ \epsilon)T,  A} = \max_{u \in V} \Pr{\sJ_{v,u}(G_p) > (1+ \epsilon)T,  A}.$$
Then,
\bean\Pr{\sJ(G_p) > (1+ \epsilon)T, A}
&\leq& \Pr{\exists  u \in V ;  \sJ_{v,u}(G_p) > (1+ \epsilon)T,  A}\\
&\leq&  n\cdot\Pr{\sJ_{v,w'}(G_p) > (1+ \epsilon)T,  A}, \eean
and the last expression is equal to
\bea\label{expcond}n\cdot\Ex{\mathbf{1}_A\mathbb{P}( \sJ_{v,w'}(G_p) > (1 + \epsilon)T|\{Ord_u\}_{u \in V}, \{N_u\}_{u \in V}, \{N^p_u\}_{u \in V})},\eea
because $A$ is $\sigma(\{Ord_u\}_{u \in V}, \{N_u\}_{u \in V},\{N^p_u\}_{u \in V})$-measurable.

Defining
$$ \mathbb{Q}(\cdot) = \mathbb{P}(\cdot|  \{Ord_u\}_{u \in V} = \{(m_1^u, \dots, m_d^u)\}_{u \in V}, \{N_u\}_{u \in V} = \{n_u\}_{u \in V},\{N^p_u\}_{u \in V} = \{n^p_u\}_{u \in V})$$
and $\mu$ as the law of $(\{Ord_u\}_{u \in V}, \{N_u\}_{u \in V},\{N^p_u\}_{u \in V})$, we obtain that expression (\ref{expcond}) is equal to
\bea\label{int}n\cdot \int \mathbf{1}_{\widetilde{A}}(\{\widetilde{m}_u\}_u,\{n_u\}_u,\{n^p_u\}_u)\cdot\mathbb{Q}(\sJ_{v,w'}(G_p) > (1+ \epsilon)T)\, d\mu(\{\widetilde{m}_u\}_u,\{n_u\}_u,\{n^p_u\}_u).\eea
where $\widetilde{m}_u = (m_1^u, \dots, m_d^u)$ and $\widetilde{A}$ is a Borel measurable set such that $\mathbf{1}_A$ is equal to $\mathbf{1}_{\widetilde{A}}(\{Ord_u\}_u,\{N_u\}_u,\{N^p_u\}_u)$.

For $u \sim w$, let $J_{u, w}(G)$ be the time until vertex $u$ chooses $w$ to transmit the information according to the {\small PWR$(G)$} and $J_{u, w}(G_p)$ the analogue for {\small PWR$(G_p)$}. Fixed $\{Ord_u=\widetilde{m}_u\}_{u \in V}$, the event $\{\sJ(G) \leq  T\}$ implies the existence of a path $\gamma : v = v_0 \sim \dots \sim v_l = w'$ such that $J_{v_0, v_1}(G) + \dots + J_{v_{l - 1},v_l}(G) \leq T$. Also note that $N_{v_j}>T$ and $J_{v_j, v_{j+1}}\leq T$ implies $(v_j, v_{j+1})\in G_p$ for each $j=0,\dots, l-1$. So, expression (\ref{int}) is less than or equal to
\bea\label{int}n\cdot \int \mathbf{1}_{\widetilde{A}}(\{\widetilde{m}_u\}_u,\{n_u\}_u,\{n^p_u\}_u)\cdot\mathbb{Q}(J_{v_0, v_1}(G_p) + \dots + J_{v_{l - 1},v_l}(G_p) > (1+ \epsilon)T)\, d\mu.\eea

Now, we will find the conditional distribution of $J_{v_j,v_{j+1}}(G_p)$. Take $0\leq r\leq m\leq l$ and $0\leq k\leq l$, we want to calculate
\bean \Pr{J_{v_j,v_{j+1}}(G_p)=k|J_{v_j,v_{j+1}}(G)=r, N_{v_j}=m, N^p_{v_j}=l}.\eean
As $r\leq m$, we have that $v_{j+1}\in \sN_{v_j}\subset\sN^p_{v_{j}}$. Moreover, $J_{v_j,v_{j+1}}(G)=r$ implies that $v_{j+1}$ is the $(j+1)$-th vertex chosen in uniform ordering of $\sN_{v_j}$. As the order of $\sN^p_{v_j}$ preserves the order of $\sN_{v_j}$, we have that $J_{v_j,v_{j+1}}(G_p)$ is the time until the $(j+1)$-th sucess is obtained in repetead random sampling without replacement from a dichotomous population of size $N^p_{v_j}$ with $N_{v_j}$ successes. Then, in $\{J_{v_j,v_{j+1}}(G) \leq N_{v_j}\}$,
\bean \Pr{J_{v_j,v_{j+1}}(G_p)=k|J_{v_j,v_{j+1}}(G), N_{v_j}, N^p_{v_j}}= \Pr{NH(N_{v_j}^p,N_{v_j},J_{v_j,v_{j+1}}(G))=k}.\eean
So, the expression (\ref{int}) can be written as
\bea\label{int2}n\cdot \int \mathbf{1}_{\widetilde{A}}(\{\widetilde{m}_u\}_u,\{n_u\}_u, \{n^p_u\}_u)\cdot\Pr{Y_0 + \dots + Y_l > (1+ \epsilon)T}\, d\mu\eea
where, for each $j=0,\dots,l-1$, $Y_j$ has distribution $NH(N_{v_j}^p,N_{v_j},J_{v_j,v_{j+1}}(G))$ and $J_{v_0,v_1} + \dots + J_{v_{l-1},v_l} \leq T$ in $\widetilde{A}$.

Inside $\{\forall u\in V;|\sN_u^p\backslash \sN_u| \leq \delta\cdot|\sN_u|, N_u^p>\frac{CT}{2}\}$ we have that
\bean \frac{N_u}{N_u^p}-\frac{J_{v_j,v_{j+1}}(G)}{N_u^p}&\geq&\frac{N_u}{N_u+|\sN_u^p\backslash\sN_u|}-\frac{T}{N_u^p}\\
&\geq&\frac{1}{1+\delta}-\frac{2T}{CT}\\&=&1-\liloh{1}.
\eean
Then, using previous section
$$\Pr{Y_0 + \dots + Y_l > (1+ \epsilon)T}\leq\Pr{G_0 + \dots + G_T > (1+ \epsilon)T}$$
where $G_0, \dots, G_T$ are independent geometric random variables with parameter $1-\liloh{1}$. Thus, for $n$ sufficiently large, we can use Lemma \ref{expdec} and obtain that
$$\Pr{G_0 + \dots + G_T > (1+ \epsilon)T} = \liloh{n^{-1}},$$
because $T\geq \log_2n$. So, expression (\ref{int2}) is bounded by $\liloh{1}$ and this finishes the proof of the proposition.
\end{proof}

\begin{proof}[\thmref{push}] By hypothesis, $\sT(G_n) \leq T_n$ with high probability. Then, using Proposition \ref{L1}, we have that $\sJ(G_n) \leq T_n$ w.h.p.. As $T_n = \liloh{p_n.d_n}$ and $p_n=\liloh{1}$ we can use Proposition \ref{L2} and obtain that $\sJ(G_{n, p_n}) \leq (1 + \epsilon')T_n$ w.h.p.. Finally, we use the second part of Proposition \ref{L1} to conclude that $\sT(G_{n, p_n}) \leq (1+\epsilon')^2T_n$ w.h.p.. Choosing $\epsilon'$ appropriately we have the result.\end{proof}

\section{Appendix}

In this appendix we prove Lemma \ref{cond}. First, consider the following theorem (see \cite{Kll}):

\begin{theorem}[Disintegration]\label{disintegration} Let $\xi$ and $\eta$ be two random variables in a probability space $(\Omega, \sF, \Prwo)$. Consider a measurable function $f$ on $\R\times\R$ with $\Ex{|f(\xi, \eta)|} < \infty$. Then
$$\Ex{f(\xi, \eta)|\eta} = \int f(s, \eta) \,\mu(\eta, ds),\, \Prwo-a.s.,$$
where $\mu(\eta, \cdot)$ is the regular conditional probability of $\xi$ given $\eta$. \end{theorem}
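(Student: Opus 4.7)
This is a classical result on regular conditional probabilities, and the plan is to prove it by the standard measure-theoretic approximation scheme applied to $f$, reducing everything to the defining property of $\mu(\eta,\cdot)$.

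The first step is to secure the existence of a regular conditional distribution $\mu(\eta,\cdot)$ of $\xi$ given $\eta$: since $\xi$ takes values in the Polish space $\R$, there is a probability kernel $\mu : \R \times \sB(\R) \to [0,1]$ such that $\omega \mapsto \mu(\eta(\omega), B)$ is a version of $\Pr{\xi \in B \mid \eta}$ for every Borel $B$. This existence statement is itself a standard fact on Polish spaces, and is part of the reference to \cite{Kll}.

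With $\mu$ in hand, I would establish the identity $\Ex{f(\xi,\eta)\mid\eta} = \int f(s,\eta)\,\mu(\eta,ds)$ by the usual three-step extension. For indicator functions of measurable rectangles $f = \mathbf{1}_{A\times B}$, both sides equal $\mathbf{1}_B(\eta)\,\mu(\eta,A)$ directly from the definition of $\mu$. By linearity, the identity extends to all simple functions on $\R^2$; the functional monotone class theorem then extends it to all bounded measurable $f$, since the class of $f$ for which the identity holds is a monotone vector space containing the algebra generated by measurable rectangles, and rectangles form a $\pi$-system generating $\sB(\R^2)$. For the general integrable case, decompose $f = f^+ - f^-$, apply the bounded case to the truncations $f^\pm \wedge n$, and pass $n \to \infty$ by monotone convergence on both sides; the assumption $\Ex{|f(\xi,\eta)|} < \infty$ guarantees the limits are finite.

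The only point requiring mild care is the measurability of $\omega \mapsto \int f(s,\eta(\omega))\,\mu(\eta(\omega),ds)$, so that the right-hand side is a legitimate version of the conditional expectation. This is immediate for rectangle indicators, preserved by linear combinations, and propagated through monotone limits by the same monotone class argument. Once the existence of $\mu$ is granted, no step poses a real obstacle; the entire result is essentially formal, which is precisely why the authors choose to cite \cite{Kll} rather than reprove the statement from scratch.
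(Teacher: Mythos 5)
The paper does not prove this statement at all: it is quoted verbatim as a known result, with the proof deferred to the cited reference \cite{Kll} (Kallenberg), so there is no in-paper argument to compare against. Your proposal is the standard and correct route --- existence of the regular conditional distribution on the Polish space $\R$, verification of the identity for indicators of measurable rectangles, extension by a functional monotone class argument over the $\pi$-system of rectangles to bounded measurable $f$, and finally $f=f^+-f^-$ with truncation and monotone convergence under the integrability hypothesis --- which is essentially the argument in the cited source. One phrasing nit: linearity alone extends the identity only to linear combinations of rectangle indicators, not to all simple functions on $\R^2$ (whose level sets are arbitrary Borel sets); this costs nothing, since your subsequent monotone class step covers general bounded measurable $f$ anyway, but the intermediate claim should be stated for the rectangle algebra rather than for all simple functions. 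With that wording fixed, the proof is complete and consistent with what the paper relies on.
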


\begin{proof}[Lemma \ref{cond}]
We proceed by induction in $r$.

For $r = 1$ the result follows because $X_1 - X_0|X_0 \sim X_1$ and $X_1 \preceq Y_1$.

Now, define $f(\xi, \eta) = 1\{\xi + \eta \geq t\}$. Hence
\bean\Pr{X_r \geq t} &=& \Pr{X_{r -1} + X_r - X_{r -1}\geq t}\\ &=& \Ex{f(X_{r - 1}, X_r - X_{r -1})}\\ &=& \Ex{\Ex{f(X_{r - 1}, X_r - X_{r -1})| X_{r - 1}}}\eean

Let $\mu(X_{r - 1}, \cdot)$ be the regular conditional probability of $ X_r - X_{r -1}$ given $X_{r - 1}$ and $\nu$ the distribution of $Y_r$. By Theorem \ref{disintegration}, the expression above is equal to
$$\Ex{\int f(X_{r - 1}, s)\, \mu (X_{r -1}, ds)}.$$

By assumption, we have $X_r - X_{r -1}| X_{r - 1} \preceq Y_r$ and this means that $\mu (X_{r - 1} , \cdot) \preceq \nu, \ \Prwo$-almost surely. As $f(\xi, \cdot)$ is increasing upper semicontinuos we have that
$$\Ex{\int f(X_{r - 1}, s)\, \mu (X_{r -1}, ds)} \leq \Ex{\int f(X_{r -1}, s)\, \nu (ds)}.$$

Now, take $\widetilde{Y}_r \sim Y_r$ independent of $X_{r - 1}$ and let $\vartheta$ be the distribution of $\widetilde{Y}_r$. The last expression is equal to
$$ \Ex{\int f(X_{r -1}, s) \,\vartheta (ds)} = \Pr{X_{r - 1} + \widetilde{Y}_r \geq t}.$$
Since $\widetilde{Y}_r$ is independent of $X_{r - 1}$, we can use the substitution principle to obtain
\bean \Pr{X_{r - 1} + \widetilde{Y}_r \geq t} &=& \Ex{\Ex{f(X_{r - 1}, \widetilde{Y}_r)| \widetilde{Y}_r}}\\
&=& \Ex{\Ex{f(X_{r - 1}, \tilde{y}_r)}(\tilde{y}_r = \widetilde{Y}_r)}\\ &=& \Ex{\Pr{X_{r - 1} \geq t - \tilde{y}_r}(\tilde{y}_r = \widetilde{Y}_r)}.\eean

By the induction hypothesis, there exist $\widetilde{Y}_i \sim Y_i$, for $i = 1, \dots, r-1$, independent random variables such that $X_{r - 1} \preceq \widetilde{Y}_1 + \dots + \widetilde{Y}_{r - 1}$ (we can take $\widetilde{Y}_1, \dots, \widetilde{Y}_{r - 1}$ independent of $\widetilde{Y}_r$). Therefore,
\bean\Pr{X_r \geq t} &\leq& \Pr{X_{r - 1} + \widetilde{Y}_r \geq t}\\
&\leq& \Pr{\widetilde{Y}_1 + \dots + \widetilde{Y}_{r - 1} + \widetilde{Y}_r \geq t}\eean
and the result follows.
\end{proof}

\textbf{Acknowledgments} Alan Prata thanks Bruno Gois, Marcelo Hil\'ario and Yuri Lima for useful suggestions.

\noindent (Roberto Imbuzeiro Oliveira) IMPA, Rio de Janeiro\\
{\it E-mail address:} rimfo@impa.br\\ \\
(Alan Prata) IMPA, Rio de Janeiro\\
{\it E-mail address:} alan@impa.br

\end{document}